\documentclass[10pt]{amsart}
\pdfoutput=1 

\usepackage{amsmath,amsthm,amssymb}
\usepackage[mathscr]{eucal}
 \usepackage{cite}
\usepackage{upgreek}
\usepackage[bookmarks=false]{hyperref}
\usepackage{enumerate}
\usepackage{mathrsfs}
\usepackage{blindtext}
\usepackage{scrextend}
\usepackage{bm} 
\addtokomafont{labelinglabel}{\sffamily}
\usepackage{color}
\usepackage[at]{easylist}
\usepackage{tikz}
\usepackage{svg}
\usepackage{graphicx}
\usepackage{mathtools}

\usepackage{comment}

\numberwithin{equation}{section}

\newtheorem{main}{Theorem}

\newtheorem{mcor}[main]{Corollary}

\newtheorem{thm}{Theorem}[section]
\newtheorem*{thm*}{Theorem}
\newtheorem{lem}[thm]{Lemma}
\newtheorem*{prob*}{Problem}

\newtheorem*{prop*}{Proposition}

\newtheorem*{cor*}{Corollary}

\theoremstyle{definition}
\newtheorem{defn}[thm]{Definition}
\newtheorem*{defn*}{Definition}

\newtheorem*{question*}{Question}
\newtheorem*{Pquestion*}{Popa's question}

\newtheorem*{conv*}{Convention}

\newcommand{\N}{\mathbb{N}}
\newcommand{\R}{\mathbb{R}}
\newcommand{\C}{\mathbb{C}}

\newcommand{\F}{\mathbb{F}}

\newcommand{\M}{\mathbb{M}}

\newcommand{\cF}{\mathcal{F}}

\newcommand{\cU}{\mathcal{U}}

\newcommand{\ee}{\varepsilon}

\definecolor{aws}{RGB}{255, 0, 221}

\begin{document}

\title[Lifting for N-independent sets in II$_1$ factors]
{Lifting for N-independent sets in II$_1$ factors}

%\begin{comment}

\author[G. Patchell]{Gregory Patchell}
\address{\parbox{\linewidth}{Department of Mathematics, Vanderbilt University,\\ 1326 Stevenson Center, Nashville, TN 37240, USA}}
\email{gregory.d.patchell@vanderbilt.edu}
\urladdr{https://sites.google.com/view/gpatchel}

\author[A. Shiner]{Austin Shiner}
\address{\parbox{\linewidth}{Mathematical Institute, University of Oxford, Andrew Wiles Building, \\ Radcliffe Observatory Quarter, Woodstock Road, Oxford, OX2 6GG, UK}}
\email{austin.shiner@mansfield.ox.ac.uk}

\begin{abstract}
    In a tracial von Neumann algebra $(M,\tau)$, two subsets $X,Y$ are said to be $N$-independent if alternating products of length at most $N$ of trace zero words from $X$ and $Y$ have trace zero. We show that two approximately $2N$-independent finite tuples can be perturbed, in operator norm, to be exactly $N$-independent. Consequently, we are able to generalize Theorem 5.1 and Theorem H of Houdayer--Ioana \cite{houdayer2023asymptotic} and resolve Problem 31 of Kunnawalkam Elayavalli \cite{elayavalli2025} in the positive.
\end{abstract}

\maketitle

\section{Introduction}

Free independence has long been a central theme in operator algebras. Already in 1943, Murray and von Neumann used freeness to show that $L(\F_2)$ is not approximately finite dimensional \cite{MvN43}. In the 1980s, Voiculescu formally defined free independence in pursuit of the study of the free group factors (see \cite{voiculescu2006symmetries}), giving rise to the field of free probability, which nowadays interacts with subfactors, $C^*$-algebras, random matrix theory, combinatorics, quantum groups, quantum information theory, and statistical inference. Very recently, free independence in the ultrapowers of $C^*$-algebras, dubbed selflessness by Robert in \cite{robert2025selfless}, has led to a flurry of work on the structure of $C^*$-algebras (e.g., \cite{amrutam2025strict,ozawa2025proximality}). One inspiration for selflessness was Popa's asymptotic freeness theorem from 1995: in any separable II$_1$ factor $M$, there is a unitary $u\in M^\cU$ such that $M$ and $uMu^*$ are freely independent \cite{popa1995free}. This result has led to numerous developments in the theory of II$_1$ factors, including the existence of non-elementarily equivalent non-Gamma factors \cite{chifan2023exotic} and the SOT-contractibility of the unitary group in any II$_1$ factor \cite{jekel2025unitary}.

In \cite{chifan2023exotic} (see also the related works \cite{elayavalli2025sequential,patchell20253,gao2026exotic}), one requires an orthogonality assumption to run the construction. (We note that orthogonality is equivalent to 1-independence in our terminology.) In \cite{chifan2023exotic}, orthogonality in the ultrapower of order 3 and order 2 unitaries is shown to lift to orthogonal unitaries of the same orders. In \cite{houdayer2023asymptotic}, they are able to remove the finite  order assumption on the unitaries, but at a cost: now, 2-independence is a required assumption. That is, 2-independent unitaries lift to orthogonal unitaries. Various other lifting problems are very natural. Free independence cannot lift to free independence, as a consequence of Popa's asymptotic freeness theorem (for a recent exposition, see \cite{boulanger2025free}). Whether orthogonality lifts to orthogonality remains open in general. But we are able to resolve the following: a $2N$-independent sequence of operators lifts to an $N$-independent sequence of operators. This in particular resolves Problem 31 of \cite{elayavalli2025}. 

\begin{main}\label{mthm-sri-problem}
    Let $M$ be a II$_1$ factor. Two freely independent Haar unitaries $u,v$ in $M^\cU$ can be lifted to sequences of unitaries $(u_n)$, $(v_n)$ such that for all $N$, $W^*(u_n)$ and $W^*(v_n)$ are $N$-independent for all $n$ sufficiently large.
\end{main}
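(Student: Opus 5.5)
The plan is to reduce Theorem \ref{mthm-sri-problem} to the main perturbation result announced in the abstract: if two finite tuples in a II$_1$ factor are approximately $2N$-independent, they can be perturbed in operator norm to be exactly $N$-independent. I will assume this in the following quantitative form. For every $N$, $k$ and $\ee>0$ there is $\delta>0$ such that whenever $x_1,\dots,x_k$ and $y_1,\dots,y_k$ are unitaries in $M$ whose mixed moments up to length $2N$ are within $\delta$ of being $2N$-independent, there are unitaries $x_i',y_i'$ with $\|x_i-x_i'\|<\ee$ and $\|y_i-y_i'\|<\ee$ such that $\{x_i'\}$ and $\{y_i'\}$ are $N$-independent.

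The difficulty is that $W^*(u_n)$ is an algebra, while the perturbation result is stated for finite tuples. For a single unitary generating an abelian algebra, independence of $W^*(u_n)$ and $W^*(v_n)$ reduces to words in trace-zero elements of $W^*(u_n)$. If I fix a projection-valued or finite-spectrum approximation, these are spanned by the $u_n^j-\tau(u_n^j)$. Independence is multilinear in each slot, so it suffices to test on spanning sets. If $u_n$ has finite spectrum of size $m$, then $W^*(u_n)$ is spanned by $1,u_n,\dots,u_n^{m-1}$, a finite tuple.

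So I would proceed in three steps.

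\textbf{Step 1.} Represent $u=(a_n)$ and $v=(b_n)$ with $a_n,b_n$ unitaries. Replace them by unitaries $u^{(m)}$ and $v^{(m)}$ with spectrum in the $m$-th roots of unity and uniform spectral distribution. This is done by functional calculus rounding of a Haar unitary, at operator-norm distance at most $2\pi/m$. The pair $u^{(m)},v^{(m)}$ is still free in $M^\cU$, since each lies in $W^*(u)$ or $W^*(v)$. At the level of the $a_n$, the rounding can be taken as rounding functional calculus applied to $a_n$. One then corrects the spectral measure to exactly uniform by a small perturbation, using that $M$ is a II$_1$ factor and diffuse.

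\textbf{Step 2.} For fixed $m$ and $N$, freeness in $M^\cU$ implies that the tuples $(a_n^{(m)})^j$ and $(b_n^{(m)})^j$, for $1\le j<m$, are $\delta$-approximately $2N$-independent for $\cU$-many $n$. Here the elements $u^j$ already have trace exactly zero. Apply the perturbation theorem to get $N$-independent tuples. I then need the perturbed tuples to still be powers of a single unitary. To arrange this, I would apply the theorem to the single unitaries together with the finite-order constraint, or perturb $a_n^{(m)}$ itself by conjugation by a unitary near $1$. I expect the proof of the main theorem to produce perturbations of the form $w x w^*$ or similar that preserve algebraic relations. Failing that, I would apply the theorem to the spectral projections $p_1,\dots,p_m$ of $a_n^{(m)}$, with trace-zero parts $p_i-1/m$. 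I would then take the new spectral projections via a standard lifting of nearly orthogonal projections, accepting only an $N$-independence conclusion for $W^*$ of a finite-spectrum unitary. This step, keeping the perturbed tuple an honest algebra of finite spectrum with $N$-independence, is the main obstacle.

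\textbf{Step 3.} Diagonalize. Choose $m_N\to\infty$ and $\ee_N\to0$, and for each $n$ let $N(n)$ be the largest $N\le n$ such that the construction at level $N$ succeeds at $n$. Since the success sets are in $\cU$, $N(n)\to\infty$ along $\cU$. Set $u_n,v_n$ to be the resulting perturbations. Then $\|u_n-a_n\|\le 2\pi/m_{N(n)}+\ee_{N(n)}\to0$ along $\cU$, so $(u_n)$ and $(v_n)$ lift $u$ and $v$. Moreover $W^*(u_n)$ and $W^*(v_n)$ are $N(n)$-independent, hence $N$-independent for all $n$ large in the $\cU$ sense. If a genuine "for all sufficiently large $n$" is required, redefine the sequence off a set in $\cU$, which does not change the lift.
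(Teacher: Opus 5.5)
Your architecture is the right one: the paper likewise applies Theorem~\ref{mthm-lift-algs} to $W^*(u)$, $W^*(v)$ for each $N$ and then diagonalizes over $N$ using countable cofinality of $\cU$. But Step~2 has a genuine gap, exactly where you flag it.

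The quantitative statement you assume moves each $x_i$ separately. So the outputs $x_j'$ need not be powers of a single unitary, and their span need not be an algebra. You would get $N$-independence of two finite sets, not of $W^*(u_n)$ and $W^*(v_n)$ for any unitaries $u_n,v_n$. The spectral-projection fallback does not rescue this: replacing the perturbed elements by honest orthogonal projections is a second perturbation, and exact vanishing of traces is not stable under perturbation.

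Your guess that the perturbation is a conjugation is correct, but it is the core of the argument and must be proved, not expected. The paper's Theorem~\ref{thm-main-technical} does this. It builds $v=\lim_k u_k\cdots u_1$ with $u_k=\exp(ih_k)$, where the self-adjoint $h_k$ from Lemma~\ref{lem-lin-indep} solves the linearized equations $\tau(ih\kappa(x,y))=-\tau(x_{i_1}y_{j_1}\cdots x_{i_r}y_{j_r})$ for all $\kappa\in\cF_N$. The quadratic error bound makes the iteration converge in norm to a unitary with $\ee(vxv^{-1},y)=0$ and $\|v-1\|\le \frac{16N}{C_0}\sqrt{\ee}$. Conjugation by $v$ is a trace-preserving $*$-automorphism, so it carries a self-adjoint basis of $W^*(a)\ominus\C$ to one of $W^*(vav^*)\ominus\C$, with the same finite spectrum and distribution. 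Lemma~\ref{lem:trace-trick}(2) then upgrades $\ee=0$ to $N$-independence of the algebras; it uses precisely that the spans are algebras to dispose of the odd-length words.

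There is a second, smaller mismatch. Theorem~\ref{thm-main-technical} does not assume only that mixed moments are small. It needs self-adjoint tuples in the unit ball satisfying a non-degeneracy condition: $C_0|\cF_N|\sqrt{\ee}<\delta^2-(2|\cF_N|-1)\gamma$ and $\delta(\delta^2-(2|\cF_N|-1)\gamma)\ge 8C_1N|\cF_N|\ee$. Without it the linearized equations can have no solution; for a commuting pair $x,y$ and $N=1$, $\kappa(x,y)=xy-yx=0$. In your setting you must pass to orthogonal self-adjoint bases. For example, take the nonzero real and imaginary parts of $u^j$, $1\le j\le m/2$; these are mutually orthogonal for the uniform distribution on the $m$-th roots of unity. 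Freeness in $M^\cU$ then gives $\ee=\gamma=0<\delta$ in the ultrapower, hence the hypotheses hold for $\cU$-many $n$.

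With both repairs your Steps 1--3 work. They amount to unpacking the proof of Theorem~\ref{mthm-lift-algs} for singly generated algebras, merging into one the diagonalizations over $m$ and over $N$ that the paper does in two stages.

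Two further points. Your diagonalization needs $\cU$ free; the principal case is trivial, since then $M^\cU=M$. Your last remark is unjustified: redefining off a set in $\cU$ yields cofinitely many good $n$ only if $N(\cdot)$ is finite-to-one on some set in $\cU$, and nothing guarantees that. The paper only claims $N$-independence for $n$ in a set $C_N\in\cU$.
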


Our results show more concretely that if two finite sets $X,Y$ of trace-zero elements are ``almost'' $2N$-independent, then there is a unitary operator $u$, close to 1 in operator norm, such that $uXu^{-1}$ and $Y$ are $N$-independent. Thanks to the aforementioned Popa's asymptotic freeness theorem, we obtain the following natural generalization of \cite[Theorem~H]{houdayer2023asymptotic}.

\begin{main}\label{mthm-perturb}
    Let $M$ be a II$_1$ factor. If $X,Y\subset M\ominus \C$ are finite sets and $N\ge1$, there is a unitary $u\in M$ such that $uXu^{-1}$ and $Y$ are $N$-independent.
\end{main}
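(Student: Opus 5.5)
The plan is to combine Popa's asymptotic freeness theorem with the paper's perturbation result: approximately $2N$-independent finite tuples can be conjugated by a unitary close to $1$ in operator norm to become exactly $N$-independent. We may assume $M$ is separable. If it is not, first pass to a separable II$_1$ subfactor $P\subset M$ containing $X\cup Y$; this uses the standard fact that every separable subset of a II$_1$ factor lies in a separable II$_1$ subfactor. We then work inside $P$, since $N$-independence only involves the trace, which is inherited.

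By Popa's theorem \cite{popa1995free} there is a unitary $w=(w_n)_n\in P^\cU$ with $wPw^*$ and $P$ freely independent in $P^\cU$. In particular $wXw^*$ and $Y$ are freely independent, hence $2N$-independent. Each $w_n$ can be taken unitary. Fix a finite list of the relevant alternating words: products of length at most $2N$ of trace-zero words (centered monomials) in $w_nXw_n^*$ and $Y$, of bounded degree as the definition requires. For each such word $W$, the trace of the corresponding word $W_n$ satisfies $\tau(W_n)\to\tau^\cU(W)=0$ along $\cU$. This needs some care: centering in $M^\cU$ versus in $M$ agrees here. Indeed, conjugation preserves traces, so words in $w_nXw_n^*$ have the same traces as the corresponding words in $X$, and the centering constants do not depend on $n$. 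Hence for every $\delta>0$ there is $n$ such that $w_nXw_n^*$ and $Y$ are $\delta$-approximately $2N$-independent. Operator norms stay bounded by $\max_{x\in X\cup Y}\|x\|$, which is the uniform bound the perturbation result presumably requires.

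Now apply the paper's main perturbation theorem, that approximately $2N$-independent finite tuples can be perturbed to exactly $N$-independent ones by conjugating by a unitary $v$ near $1$, to the pair $(w_nXw_n^*, Y)$ with $\delta$ below the threshold it prescribes for the given $N$, $|X|$, $|Y|$ and norm bound. This gives $v\in P$ with $vw_nXw_n^*v^*$ and $Y$ exactly $N$-independent, and we set $u=vw_n$. The main obstacle is checking that the hypotheses of the perturbation theorem match exactly. First, the quantitative threshold must depend only on $N$, the cardinalities and the norm bound, and not on the particular elements; otherwise we cannot choose $n$ after fixing $\delta$. Second, the notion of ``approximately $2N$-independent'' must be phrased for finitely many words of bounded length so that ultrapower convergence applies. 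If the threshold depended on the elements themselves, I would instead apply the theorem directly in the ultrapower form, lifting $wXw^*$ and $Y$ as sequences, which is exactly the setting of Theorem \ref{mthm-sri-problem}, and then take one large $n$.
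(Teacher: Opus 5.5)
Your overall architecture matches the paper's: Popa's unitary $w\in P^\cU$, fix one coordinate $w_n$, correct it by a unitary $v$, and set $u=vw_n$. But the step you flag as the main obstacle is a genuine gap, and it is where the paper's proof does its work.

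The perturbation result the paper proves, Theorem~\ref{thm-main-technical}, is not triggered by approximate $2N$-independence alone. It applies only to self-adjoint tuples $x\in M_{\mathrm{sa},1}^m$, $y\in M_{\mathrm{sa},1}^n$, and it requires
$C_0|\cF_N|\sqrt{\ee(x,y)}<\delta(x,y)^2-(2|\cF_N|-1)\gamma(x,y)$ and $\delta(x,y)\bigl(\delta(x,y)^2-(2|\cF_N|-1)\gamma(x,y)\bigr)\ge 8C_1N|\cF_N|\ee(x,y)$. Here $\delta(x,y)$ and $\gamma(x,y)$ measure how large and how nearly orthogonal the generalized commutators $\kappa(x,y)$ are.

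A threshold depending only on $N$, the cardinalities and a norm bound, with $v$ close to $1$, cannot exist. Take $x=y=ts$ with $s$ a trace-zero symmetry and $t$ small. All mixed moments are $O(t^2)$, yet $1$-independence of $vxv^*$ and $y$ forces $\tau(vsv^*s)=0$, hence $\|v-1\|\ge 1/2$. So dependence on the elements is unavoidable. It is harmless only if the free limit is non-degenerate, and for your raw sets it need not be. If $X$ contains two proportional elements, two distinct $\kappa$'s give proportional operators for every $n$. Then $\gamma\ge\delta^2$ and the hypothesis can never be met. Non-self-adjoint elements are not even in the theorem's domain.

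Your fallback through Theorem~\ref{mthm-sri-problem} does not help either. That result, via Theorem~\ref{mthm-lift-algs}, concerns separable abelian subalgebras and produces new finite-dimensional approximants, not a unitary conjugate of the given $X$.

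The missing ideas are these.

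\emph{Non-degenerate bases.} Replace $X,Y$ by $\widetilde X=\{\mathrm{Re}\,x,\mathrm{Im}\,x : x\in X\}$ and $\widetilde Y$. Let $C$ be the span of $\widetilde X\cup\{xx'-\tau(xx') : x,x'\in\widetilde X\}$, define $D$ likewise, and take orthogonal bases $x$ of $E=C+C^*$ and $y$ of $F=D+D^*$ consisting of elements of $M_{\mathrm{sa},1}$.

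\emph{Free limit.} Under Popa's $w$, distinct alternating words in $wxw^*$ and $y$ are orthogonal. This gives $\gamma(wxw^*,y)=0$, $\delta(wxw^*,y)>0$ and $\ee(wxw^*,y)=0$. These quantities are continuous in finitely many moments, so some $w_n$ satisfies the hypotheses of Theorem~\ref{thm-main-technical}, which then yields $v$ with $\ee(vw_nxw_n^*v^*,y)=0$.

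\emph{Odd-length words.} The conclusion $\ee=0$ only kills traces of words $x_{i_1}y_{j_1}\cdots x_{i_r}y_{j_r}$. Words of odd length that begin and end in the same set are handled by Lemma~\ref{lem:trace-trick}(1). This is exactly why the centered products $xx'-\tau(xx')$ must be put into $E$ and $F$ before orthogonalizing. Linearity then passes from $\widetilde X,\widetilde Y$ back to $X,Y$.
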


In a similar manner, we are able to generalize \cite[Theorem~5.1]{houdayer2023asymptotic} to the setting of $N$-independent abelian algebras.

\begin{main}\label{mthm-lift-algs}
    Let $(M_i,\tau_i)_{i \in I}$ be a set of tracial von Neumann algebras, and let $\cU$ be an ultrafilter on $I$. Let $A,B$ be abelian, separable subalgebras of $\prod_\cU M_i$ which are $2N$-independent. Then there exist finite-dimensional $N$-independent abelian subalgebras $C_i,D_i\subset M_i$ such that $A\subset \prod_\cU C_i$ and $B\subset \prod_\cU D_i$. 
\end{main}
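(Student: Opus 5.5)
We will follow the strategy of \cite[Theorem~5.1]{houdayer2023asymptotic}, replacing their orthogonality-from-2-independence perturbation with the paper's main perturbation result: approximately $2N$-independent finite tuples can be perturbed in operator norm to be exactly $N$-independent.

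\textbf{Step 1: reduce to finite-dimensional subalgebras.} Since $A$ and $B$ are abelian and separable, each is generated by a single self-adjoint element, and hence is the weak closure of an increasing union of finite-dimensional subalgebras $A_k\subset A_{k+1}$ and $B_k\subset B_{k+1}$. Each $A_k$ is spanned by finitely many projections $p^k_1,\dots,p^k_{a_k}$ summing to $1$, with partitions refining as $k$ increases; similarly $B_k$ is spanned by projections $q^k_j$. Because $A_k\subset A$ and $B_k\subset B$, each pair $A_k,B_k$ is exactly $2N$-independent.

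\textbf{Step 2: lift and apply the perturbation theorem.} Lift the partitions to partitions of unity by projections $p^k_{i,n},q^k_{j,n}\in M_n$ with matching traces and compatible refinements. The standard lifting of finite partitions of unity in ultraproducts, done inductively in $k$, gives this: lift the coarse partition first, then lift each refinement inside the lifted projections. Exact $2N$-independence in $\prod_\cU M_i$ gives, for each fixed $k$ and $\varepsilon>0$, a $\cU$-large set of $n$ on which the trace-zero parts $\{p^k_{i,n}-\tau(p^k_{i,n})\}$ and $\{q^k_{j,n}-\tau(q^k_{j,n})\}$ are $\varepsilon$-approximately $2N$-independent. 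Here it matters that these are finitely many words, each with operator norm at most $2$. For such $n$, the main perturbation result yields a unitary $u_n$ with $\|u_n-1\|\le\delta(\varepsilon)\to0$ such that $u_n(\cdot)u_n^*$ of the first family and the second family are exactly $N$-independent. Conjugation by a unitary preserves projections, partitions of unity and traces. Therefore $C^k_n:=u_nA_{k,n}u_n^*$ and $D^k_n:=B_{k,n}$ are finite-dimensional abelian algebras, and they are $N$-independent, since $N$-independence of the trace-zero generators passes to the algebras by multilinear expansion of trace-zero polynomials.

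\textbf{Step 3: diagonalize.} Choose $\cU$-sets $I_1\supset I_2\supset\cdots$ with $\bigcap_k I_k=\emptyset$, where this is needed (otherwise $\cU$ is principal and the statement is trivial). On $I_k$, take the level-$k$ perturbation with $\varepsilon_k\to0$. For $n\in I_k\setminus I_{k+1}$ set $C_n=C^k_n$ and $D_n=D^k_n$, and for $n\notin I_1$ set $C_n=D_n=\C$. Then for every fixed $k$, $\cU$-almost every $n$ satisfies $C_n\supset u_nA_{k,n}u_n^*$ with $\|u_n-1\|\to0$ along $\cU$. This holds because for $n\in I_m$ with $m\ge k$ the partitions refine. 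Hence $A_k\subset\prod_\cU C_n$ for every $k$, and so $A\subset\prod_\cU C_n$ by weak closure; the same argument gives $B\subset\prod_\cU D_n$.

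\textbf{Main obstacle.} The main difficulty is the refinement compatibility in Step 3. The unitary $u_n$ chosen at level $m$ must conjugate the level-$m$ partition, and hence every coarser level-$k$ lift, into something close to the original lift. This holds because a single $u_n$ conjugates the whole nested family at once, and $\|u_n-1\|$ is small. One also has to check that the perturbation result gives a bound $\delta(\varepsilon)$ depending only on $\varepsilon$, $N$ and the number and norms of the elements, uniformly in $n$. We expect to take this uniformity from the quantitative statement of the main perturbation result.
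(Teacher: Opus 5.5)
Your outline (nested finite-dimensional subalgebras, nested lifts of the partitions, a perturbation at each level, then a diagonal choice over $\cU$-sets) is the Houdayer--Ioana scaffolding that the paper also uses. However, the step where you invoke the perturbation result fails as written. The abstract's one-line summary is not what the paper proves. There is no black box saying that $\varepsilon$-approximately $2N$-independent tuples can be conjugated into exactly $N$-independent ones by a unitary $u$ with $\|u-1\|$ bounded by a function of $\varepsilon$, $N$ and the number and norms of the elements.

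Theorem~\ref{thm-main-technical} has two extra hypotheses on the generalized commutators $\kappa(x,y)$, $\kappa\in\cF_N$:
\[
C_0|\cF_N|\sqrt{\ee}<\delta^2-(2|\cF_N|-1)\gamma
\quad\text{and}\quad
\delta\bigl(\delta^2-(2|\cF_N|-1)\gamma\bigr)\ge 8C_1N|\cF_N|\ee .
\]
Its bound $\|u-1\|\le 16N|\cF_N|\ee/(\delta^2-(2|\cF_N|-1)\gamma)$ also depends on $\delta$ and $\gamma$. For the family you feed in, these hypotheses fail identically. Since $\sum_i\bigl(p^k_{i,n}-\tau(p^k_{i,n})\bigr)=0$, the $\kappa(x,y)$ are linearly dependent; already $\sum_i(x_iy_j-y_jx_i)=0$. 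On the other hand, $\delta^2>(2|\cF_N|-1)\gamma$ makes their Gram matrix strictly diagonally dominant, which forces linear independence. With two projections, for instance, $x_2=-x_1$ and $\gamma\ge\delta^2$.

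What is missing is exactly the place where $2N$, rather than $N$, enters.
\begin{itemize}
\item Take $x$ and $y$ to be orthogonal bases of $A_k\ominus\C$ and $B_k\ominus\C$ consisting of self-adjoint elements of norm at most $1$. Lift them into $A_{k,n}\ominus\C$ and $B_{k,n}\ominus\C$.
\item The quantities $\|\kappa(x,y)\|_2^2$ and $\tau(\kappa_2(x,y)^*\kappa_1(x,y))$ are traces of words with at most $4N$ letters. After merging adjacent letters from the same algebra (this uses that $A_k$, $B_k$ are algebras), $2N$-independence lets you compute them exactly as in a free product. There, distinct reduced words in orthogonal bases are orthogonal.
\item This gives $\gamma(x,y)=0$ and $\delta(x,y)>0$ in $\prod_\cU M_i$. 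Hence $\gamma(x_n,y_n)\to0$, $\delta(x_n,y_n)\to\delta(x,y)$ and $\ee(x_n,y_n)\to0$ along $\cU$.
\item So Theorem~\ref{thm-main-technical} applies for $\cU$-most $n$, with $\|u_n-1\|\to0$ at each fixed level. The rate is level-dependent, which your diagonalization absorbs.
\item The theorem only yields $\ee(u_nx_nu_n^*,y_n)=0$, i.e.\ vanishing traces of words of the form $x\cdots y$. You still need Lemma~\ref{lem:trace-trick}(2), using that $\mathrm{span}(x_n,1)=A_{k,n}$ and $\mathrm{span}(y_n,1)=B_{k,n}$ are algebras, to get $N$-independence.
\end{itemize}
This is the actual content of the paper's proof (``orthogonal bases $\ldots$ such that $\ee(x_i,y_i)=0$''). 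By contrast, the refinement compatibility you single out as the main obstacle is routine.

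Two smaller points. First, lifting partitions with matching traces is impossible in general (e.g.\ when the $M_i$ are matrix algebras), though you do not need it. Second, suppose no decreasing sequence of $\cU$-sets with empty intersection exists (e.g.\ $\cU$ principal). Then the statement is not trivial but false for diffuse $A$, because $\prod_\cU C_i$ is finite-dimensional. So one must assume $\cU$ is countably incomplete.
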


Our techniques only require holomorphic functional calculus, so our results apply not just to tracial von Neumann algebras but also to tracial C$^*$-algebras and even tracial Banach $*$-algebras. Our results also give an analogue of the Amitsur--Levitski theorem, which states that the $n$-by-$n$ matrix algebra over $\C$ satisfies no noncommutative polynomial identity of degree less than $2n$ (though it does satisfy an identity of degree $2n$) \cite{amitsur1950minimal}. Our results, combined with Voiculescu's asymptotic free independence theorem \cite{voiculescu1991limit}, abstractly show the following.

\begin{mcor}
    Let $m,n,N$ be positive integers. Then there is a positive integer $K$ such that there is an $m$-tuple $x \in \M_{K\times K}(\C)^m$ and an $n$-tuple $y \in \M_{K\times K}(\C)^n$ such that $x$ and $y$ are $N$-independent. 
\end{mcor}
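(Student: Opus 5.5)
The plan is to combine Voiculescu's asymptotic freeness theorem \cite{voiculescu1991limit} with the finite-dimensional form of the perturbation result behind Theorem~\ref{mthm-perturb}. That result says that approximately $2N$-independent finite sets of trace-zero elements can be conjugated, by a unitary close to $1$ in norm, so that they become exactly $N$-independent. Its proof uses only holomorphic functional calculus, so it applies in $\M_{K\times K}(\C)$ with the normalized trace.

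First, fix free semicircular tuples: $s=(s_1,\dots,s_m)$ and $t=(t_1,\dots,t_n)$, all $m+n$ elements freely independent and with trace zero. By Voiculescu, independent GUE matrices $X^{(K)}_1,\dots,X^{(K)}_m,Y^{(K)}_1,\dots,Y^{(K)}_n$ converge almost surely in $*$-distribution to $(s,t)$. So for each $\delta>0$ and all large $K$ we can pick deterministic realizations whose mixed moments up to a fixed length $L$ (say $L=2N\cdot d$ for the relevant word degree $d$) are within $\delta$ of the free moments. We subtract the normalized traces, which tend to $0$, so that every entry has trace zero exactly. We also record that operator norms stay uniformly bounded; this holds almost surely by Haagerup--Thorbj{\o}rnsen, or alternatively we can truncate by a continuous cutoff.

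Next, we must check that this approximate freeness implies approximate $2N$-independence in exactly the quantitative sense the lifting theorem requires. Alternating products of length at most $2N$ of trace-zero words in $x$ and in $y$ have trace zero for the free limit. By continuity of moments, their traces at level $K$ are within some $\varepsilon(\delta)\to0$ of zero. This continuity is uniform because the words form a finite family of bounded degree and the norms are bounded.

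Finally, we apply the perturbation theorem in $M=\M_{K\times K}(\C)$. It produces a unitary $u$ with $\|u-1\|$ small such that $uxu^{-1}$ and $y$ are $N$-independent; we then take $x$ replaced by $uxu^{-1}$. The main obstacle is this step: matching the paper's notion of ``almost $2N$-independent'' (which words, and what tolerance relative to norms) with what the random matrices deliver. It is also important that the tolerance depends only on $m,n,N$ and the norm bounds, not on the ambient algebra or on $K$. If the theorem is stated only for ultraproducts, we instead argue by contradiction. If the conclusion failed for all $K$, then the sequence of GUE realizations would define $2N$-independent (indeed free) tuples in $\prod_\cU \M_{K\times K}(\C)$. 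Theorem~\ref{mthm-lift-algs}, or its tuple version, would then lift them to $N$-independent tuples in $\M_{K\times K}(\C)$ for $\cU$-many $K$, which is a contradiction. Either route yields a single $K$ that works.
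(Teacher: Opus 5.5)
Your plan (use Voiculescu's theorem to get nearly free tuples in $\M_{K\times K}(\C)$, then run the paper's perturbation machinery inside $\M_{K\times K}(\C)$) is the route the paper indicates. The gap is the step you flag yourself as the main obstacle, and the result you invoke there is not what the paper proves. ``Approximately $2N$-independent sets can be conjugated to be exactly $N$-independent'' is the abstract's summary, not a theorem you can cite. Theorem~\ref{thm-main-technical} applies to tuples in $M_{\mathrm{sa},1}$ satisfying $C_0|\cF_N|\sqrt{\ee}<\delta^2-(2|\cF_N|-1)\gamma$ and $\delta(\delta^2-(2|\cF_N|-1)\gamma)\ge 8C_1N|\cF_N|\ee$. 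It produces a unitary with $\ee(uxu^{-1},y)=0$, so only words $x_{i_1}y_{j_1}\cdots x_{i_r}y_{j_r}$ that begin in $x$ and end in $y$ get trace zero.

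That is not $N$-independence. For example, $\tau(x_1y_1x_2)=\tau(x_2x_1y_1)$ involves the product $x_2x_1$ and is not controlled. The missing idea is Lemma~\ref{lem:trace-trick}(1). You must apply the theorem not to $x,y$ but to self-adjoint orthogonal bases, rescaled into the unit ball, of the real span of the $x_i$ together with the real and imaginary parts of the $x_ix_{i'}-\tau(x_ix_{i'})$, and likewise for $y$. This is exactly what the proof of Theorem~\ref{mthm-perturb} does.

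You also have to verify the nondegeneracy hypotheses, and small traces of alternating words do not supply them. You need $\delta$ bounded below and $\gamma$ small, i.e.\ the generalized commutators $\kappa(x,y)$ must be uniformly nonzero and nearly pairwise orthogonal in $L^2$. A tuple with a zero or repeated entry can be exactly $2N$-independent from $y$ and still violate this. For your model the hypotheses do hold. In the limit, alternating words in orthogonal bases of subspaces of $W^*(s)\ominus\C$ and $W^*(t)\ominus\C$ are nonzero and pairwise orthogonal. This uses freeness of $W^*(s)$ and $W^*(t)$ up to $2N$ alternations (this is where $2N$ enters, since $\tau(\kappa_2^*\kappa_1)$ involves words of that length), together with orthogonality inside each basis. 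Hence $\ee=\gamma=0<\delta$ in the limit.

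At level $K$, take the bases to be Gram--Schmidt applied to a fixed family that is linearly independent in the limit: the $X_i$, the $X_i^2-\tau(X_i^2)$, and the real and imaginary parts of $X_iX_{i'}-\tau(X_iX_{i'})$ for $i<i'$, built from the centered, rescaled GUE matrices. Convergence of finitely many moments then gives both hypotheses for large $K$. Multilinearity plus Lemma~\ref{lem:trace-trick}(1) finishes.

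Your fallback does not work as written. Theorem~\ref{mthm-lift-algs} covers only abelian separable subalgebras, $W^*(s_1,\dots,s_m)$ is non-abelian once $m\ge 2$, and the paper proves no tuple version; the contradiction framing is also unnecessary. However, the corollary lets you choose the tuples, so this route repairs easily and gives the shortest proof. By Voiculescu's theorem, take free semicirculars (or free Haar unitaries) $s,t\in\prod_\cU\M_{K\times K}(\C)$. Then $A=W^*(s)$ and $B=W^*(t)$ are abelian, separable and free, hence $2N$-independent. Theorem~\ref{mthm-lift-algs} gives $N$-independent abelian $C_K,D_K\subset\M_{K\times K}(\C)$ with $A\subset\prod_\cU C_K$ and $B\subset\prod_\cU D_K$. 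Since $A$ and $B$ are diffuse, $\dim C_K>m$ and $\dim D_K>n$ for $\cU$-many $K$; otherwise $\prod_\cU C_K$ or $\prod_\cU D_K$ would be finite-dimensional. Any $m$ linearly independent trace-zero elements of $C_K$ and $n$ of $D_K$ are then $N$-independent.

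Finally, read literally, the corollary is satisfied by $x=0$, $y=0$ (even with $K=1$). The real content of either argument is that the tuples can be taken nondegenerate, e.g.\ linearly independent.
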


\subsection*{Acknowledgements}

The authors thank Srivatsav Kunnawalkam Elayavalli for helpful discussions around $N$-independence.

\subsection*{Funding}

The authors were supported by the Engineering and Physical Sciences Research Council (UK), grant EP/X026647/1.  

\subsection*{Open Access, Data, and AI Statement}

For the purpose of Open Access, the authors have applied a CC BY public copyright license to any Author Accepted Manuscript (AAM) version arising from this submission. Data sharing is not applicable to this article as no new data were created or analyzed in this work. No LLMs were used in the preparation of this manuscript.

\section{Main Theorem}

\begin{defn} Let $(M,\tau)$ be a tracial von Neumann algebra. Two subsets $X,Y \subset M \ominus \C$ are said to be \emph{$N$-independent} if for all $1\le k\le N$, $x_1,\ldots,x_k\in X$, and $y_1,\ldots,y_k\in Y,$ we have $\tau(x_1y_1x_2y_2\cdots x_ky_k)=0$ (we also allow $y_k = 1$, and if $k\geq 2,$ $x_1=1$). We say that two subalgebras $P,Q\subset M$ are $N$-independent if the sets $P\ominus \C$ and $Q\ominus \C$ are $N$-independent.

If $X$ and $Y$ are $N$-independent for all $N\ge1$, we say that $X$ and $Y$ are freely independent, and likewise for subalgebras $P$ and $Q$.
    
\end{defn}

\subsection*{Setup and notation.}

Let $(M,\tau)$ be a tracial von Neumann algebra. Fix $N\in \N.$ Denote by $M_{\mathrm{sa}}$ the elements $x\in M$ such that $x=x^*;$ denote by $M_{\mathrm{sa},1  }$ the subset of $M_{\mathrm{sa}}$ consisting of elements $x$ such that $\|x\|\le 1.$ Let $x = (x_1,\ldots,x_m)\in M_{\mathrm{sa},1}^m$ and $y = (y_1,\ldots,y_n)\in M_{\mathrm{sa},1}^n$. For a unitary $u\in M$, we write $uxu^{-1} = (ux_1u^{-1},\ldots,ux_mu^{-1})$.

Define $\cF_N$ as the set of all formal words 
\begin{align*}
    \kappa &={i_1}{j_1}{i_2}{j_2}\cdots {i_r}{j_r} \\
    & - {j_1}{i_2}\cdots {j_r}{i_1} \\
    &+ {i_2}{j_2}\cdots {j_r}{i_1}{j_1}\\
    &\mp \ldots \\
    &- {j_r}{i_1} \cdots {j_{r-1}}{i_r}.
\end{align*}
where $1\le r\le N$, $1\le i_k\le m$ and $1\le j_k \le n$ for all $k$. We define $\kappa(x,y)$ by taking a word $\kappa \in \cF_N$ and substituting each $i_k$ for $x_{i_k}$ and each $j_{k}$ for $y_{j_k}$. (In other words, take $x_{i_1}y_{j_1}x_{i_2}y_{j_2}\cdots x_{i_r}y_{j_r}$, cyclically permute the factors, and alternately add/subtract these terms together.) We can think of the elements $\kappa(x,y)$ for $\kappa \in \cF_N$ as ``generalized commutators''. We now define three quantities:
\begin{align*}
    \delta(x,y) &:= \min\{\|\kappa(x,y)\|_2 : \kappa \in \cF_N\}; \\
    \gamma(x,y) &:= \max\{|\tau(\kappa_2(x,y)^*\kappa_1(x,y))| : \kappa_1 \neq \kappa_2 \in \cF_N\};\\
    \ee(x,y) &:= \max\{|\tau(x_{i_1}y_{j_1}\cdots x_{i_r}y_{j_r})| : 1\le r\le N, 1\le i_s\le m, 1\le j_s\le n\}.
\end{align*}
We also define $\gamma(x,y) = 0$ if $|\mathcal{F}_N| = 1$.

For our first lemma, we slightly abuse notation and conflate ordered tuples of elements with finite sets for the parameters of $\ee.$

\begin{lem}\label{lem:trace-trick}
    Let $X,Y\subset M_{\mathrm{sa}}\ominus \C$ be finite sets. Set $X' = X\cup \{xx' - \tau(xx') : x,x'\in X\}$ and $Y' = Y \cup \{yy' - \tau(yy'): y,y'\in Y\}$. 
    \begin{enumerate}
        \item If $\ee(X',Y')=0$, then $X$ and $Y$ are $N$-independent.
        \item If $\mathrm{span}(X,1)$ and $\mathrm{span}(Y,1)$ are subalgebras of $M$ and $\ee(X,Y) = 0,$ then $X$ and $Y$ are $N$-independent.
    \end{enumerate}
\end{lem}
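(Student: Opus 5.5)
We will show that $\tau(w)=0$ for every \emph{alternating} word $w$ in letters from $X\cup Y$ that contains at least one letter and at most $N$ letters from each of $X$ and $Y$. Here alternating means consecutive letters come alternately from $X$ and $Y$. Every product in the definition of $N$-independence, including the degenerate cases $y_k=1$ and/or $x_1=1$ with $k\ge 2$, is such a word: it has $k\le N$ letters of each type, or fewer. So this claim suffices. We prove it by induction on the length $\ell$ of $w$, and the only tool is traciality of $\tau$ together with re-centering of a product of two letters of the same type.

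\emph{Base case and even lengths.} If $\ell=1$, then $w$ is a single element of $X$ or $Y$, which has trace zero by assumption. If $\ell$ is even, a cyclic rotation (which does not change $\tau$) puts $w$ in the form $x_{i_1}y_{j_1}\cdots x_{i_r}y_{j_r}$ with $r\le N$. For part (1), this has trace zero because $X\subset X'$, $Y\subset Y'$ and $\ee(X',Y')=0$. For part (2), it has trace zero directly from $\ee(X,Y)=0$.

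\emph{Odd lengths $\ell\ge 3$.} Rotate $w$ cyclically so that it begins and ends with letters of the same type. By the symmetric roles of $X$ and $Y$, say $w=x_1y_1x_2\cdots y_{k-1}x_k$, with $k\le N$ letters from $X$ and $k-1$ from $Y$. Then
\[
\tau(w)=\tau\big((x_kx_1)\,y_1x_2\cdots x_{k-1}y_{k-1}\big).
\]
Now write
\[
x_kx_1=\big(x_kx_1-\tau(x_kx_1)\big)+\tau(x_kx_1)\cdot 1.
\]

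\emph{The centered term.} In part (1), $x_kx_1-\tau(x_kx_1)\in X'$, so the centered term is the trace of an alternating $X'$--$Y$ word with $k-1\le N$ pairs. This vanishes since $\ee(X',Y')=0$. In part (2), $x_kx_1-\tau(x_kx_1)$ lies in $\mathrm{span}(X,1)$ and has trace zero. Since every element of $X$ is centered, $\mathrm{span}(X,1)\ominus\C=\mathrm{span}(X)$, so this element is a linear combination of elements of $X$. By multilinearity, the centered term is then a linear combination of traces of words $x_{i_1}y_{j_1}\cdots x_{i_{k-1}}y_{j_{k-1}}$, all of which vanish because $\ee(X,Y)=0$.

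\emph{The scalar term.} This is $\tau(x_kx_1)\,\tau(y_1x_2\cdots x_{k-1}y_{k-1})$. The second factor is the trace of a shorter alternating word of length $\ell-2\ge 1$, still with at most $N$ letters of each type, so it vanishes by the induction hypothesis. The case where $w$ begins and ends with $Y$-letters is identical, using $Y'$ (respectively, the assumption that $\mathrm{span}(Y,1)$ is an algebra).

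The main point needing care is the bookkeeping on lengths. The merged word must have at most $N$ alternating pairs so that $\ee$ applies: it has $k-1\le N-1$ pairs. The shorter word must remain within the scope of the induction hypothesis, which holds since it has at most $N$ letters of each type and length at least $1$. Both conditions hold, so the induction closes.
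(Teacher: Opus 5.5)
Your proof is correct and follows essentially the same route as the paper: merge the two same-type end letters of an odd word by a cyclic rotation, recenter that product, kill the centered part with $\ee(X',Y')=0$ (in (2), via $\mathrm{span}(X,1)\ominus\C=\mathrm{span}(X)$, which is the paper's observation $X'\subseteq\mathrm{span}\,X$), and kill the scalar part by induction. Your induction on total length over all alternating words, using cyclicity instead of adjoints, is slightly tidier than the paper's: it explicitly covers the $Y$--$Y$ words on which the paper invokes its induction hypothesis, and it does not use self-adjointness.
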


\begin{proof}
    (1) We must show that four types of words have trace 0: words starting with a letter in $X$ and ending with a letter in $X$; starting in $X$ and ending in $Y$; starting in $Y$ and ending in $X$; starting in $Y$ and ending in $Y$. By symmetry via taking adjoints, if suffices to consider the first two types of words. Since $X\subset X'$, $Y\subset Y'$, and $\ee(X',Y')=0$, it follows immediately that words starting in $X$ and ending in $Y$ have trace 0.

    Now consider a word $x_1y_1\cdots x_{k-1}y_{k-1}x_k$ with $1\le k\le N$ and each $x_i\in X$, $y_i\in Y$. We show by induction on $k$ that the trace of $x_1y_1\cdots x_{k-1}y_{k-1}x_k$ is 0. If $k=1,$ this is immediate since $X$ consists of trace zero elements (and similarly for $Y$). Otherwise, we compute that
    \begin{align*}
        \tau(x_1y_1\cdots x_{k-1}y_{k-1}x_k) &= \tau(x_kx_1y_1\cdots x_{k-1}y_{k-1}) \\
        &= \tau(x_kx_1)\tau(y_1\cdots x_{k-1}y_{k-1}) \\
        &\quad + \tau([x_kx_1 - \tau(x_kx_1)]y_1\cdots x_{k-1}y_{k-1}) \\
        &= 0 + 0
    \end{align*}
    since $$\tau(y_1\cdots x_{k-1}y_{k-1}) = 0$$ by the induction hypothesis and $$\tau([x_kx_1 - \tau(x_kx_1)]y_1\cdots x_{k-1}y_{k-1}) = 0$$ since $\ee(X',Y')=0.$

    (2) If $\mathrm{span}(X,1)$ and $\mathrm{span}(Y,1)$ are subalgebras of $M,$ note that $X' \subseteq \operatorname{span} X$ and $Y' \subseteq \operatorname{span}Y$, then  $\ee (X',Y')=0$ and apply (1).
\end{proof}

Our goal now is essentially to take $x,y$ where $\gamma(x,y)$ and $\ee(x,y)$ are small and find a unitary $u$, close in operator norm to 1, satisfying $\ee(uxu^{-1},y) = 0$. We begin by stating an auxiliary lemma which appears as Lemma 5.3 in \cite{houdayer2023asymptotic}.

\begin{lem}\label{lem-lin-indep}
    Let $(M,\tau)$ be a tracial von Neumann algebra, $\xi_1,\ldots,\xi_p\in M_{\mathrm{sa},1}$, and $\alpha_1,\ldots,\alpha_p\in \R$, for some $p\ge 1.$ Let $\delta>0$ and $\gamma \ge 0, \delta^2 - (p-1)\gamma>0$. Assume that $\|\xi_i\|_2 \ge \delta$ and $|\langle \xi_i,\xi_j\rangle| \le \gamma$ for all $1\le i,j\le p$ with $i\ne j$. Then there exists $h\in M_{\mathrm{sa}}$ such that $\tau(h\xi_i) = \alpha_i$ for all $1\le i \le p$ and $\|h\| \le \frac{\sum_{j=1}^p|\alpha_j|}{\delta^2-(p-1)\gamma}$. Moreover, $h$ can be chosen from the $\R$-span of $\xi_1,\ldots,\xi_p$.
\end{lem}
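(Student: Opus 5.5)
The plan is to work with the real Gram matrix of the $\xi_i$ and to take $h=\sum_{j=1}^p c_j\xi_j$ with real coefficients $c_j$. Set $G_{ij}=\langle \xi_j,\xi_i\rangle=\tau(\xi_i\xi_j)$. Since the $\xi_i$ are self-adjoint, $\tau(\xi_i\xi_j)=\tau(\xi_j\xi_i)=\overline{\tau(\xi_i\xi_j)}$, so every entry of $G$ is real and $G$ is a real symmetric matrix. The conditions $\tau(h\xi_i)=\alpha_i$ then become the linear system $Gc=\alpha$, where $\alpha=(\alpha_1,\ldots,\alpha_p)$. Any real solution $c$ gives a self-adjoint $h$ lying in the $\R$-span of the $\xi_j$, which takes care of the ``moreover'' clause. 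So the proof reduces to two things: showing $G$ is invertible, and controlling $\sum_j|c_j|$.

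For invertibility and the estimate, I will use diagonal dominance. Each diagonal entry satisfies $G_{ii}=\|\xi_i\|_2^2\ge\delta^2$, and each off-diagonal entry satisfies $|G_{ij}|\le\gamma$. Hence for every $i$ we have $G_{ii}-\sum_{j\ne i}|G_{ij}|\ge \delta^2-(p-1)\gamma>0$, so $G$ is strictly diagonally dominant.

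The key estimate is $\|c\|_\infty\le \|\alpha\|_\infty/(\delta^2-(p-1)\gamma)$. To see it, choose $i$ with $|c_i|=\|c\|_\infty$. Then
\[ |\alpha_i| \ge G_{ii}|c_i|-\sum_{j\ne i}|G_{ij}||c_j| \ge \bigl(G_{ii}-\textstyle\sum_{j\ne i}|G_{ij}|\bigr)|c_i| \ge (\delta^2-(p-1)\gamma)\|c\|_\infty. \]
The same inequality shows that $Gc=0$ forces $c=0$, so $G$ is invertible and a solution $c$ exists.

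For the norm of $h$, the naive bound $\|h\|\le\sum_j|c_j|\|\xi_j\|\le p\|c\|_\infty$ is too weak. I want $\sum_j|\alpha_j|$ in the numerator, i.e. the bound $\|c\|_1\le\|\alpha\|_1/(\delta^2-(p-1)\gamma)$. This follows from column diagonal dominance, which holds equally because $G$ is symmetric. Write $c=G^{-1}\alpha=\sum_k\alpha_k G^{-1}e_k$. Applying the $\ell^\infty$ estimate to each column gives $\|G^{-1}e_k\|_\infty\le 1/(\delta^2-(p-1)\gamma)$, but what I need is $\|G^{-1}e_k\|_1$, so I will instead bound $\|G^{-1}\|_{1\to1}$ directly. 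Write $G=D+E$ with $D$ diagonal, and check that $\|Gv\|_1\ge\sum_i|G_{ii}||v_i|-\sum_i\sum_{j\ne i}|G_{ij}||v_j| \ge(\delta^2-(p-1)\gamma)\|v\|_1$. The second sum, after swapping the order of summation, is a column sum; it is bounded by $(p-1)\gamma|v_j|$ and compared against $G_{jj}|v_j|\ge\delta^2|v_j|$. Therefore $\|c\|_1\le\|\alpha\|_1/(\delta^2-(p-1)\gamma)$. Since $\|\xi_j\|\le1$,
\[ \|h\|\le\sum_j|c_j|\le\frac{\sum_j|\alpha_j|}{\delta^2-(p-1)\gamma}. \]

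The step that needs the most care is this $\ell^1$ bound: getting exactly $\sum|\alpha_j|$ requires using column dominance rather than row dominance. Symmetry of $G$, which comes from self-adjointness and the trace property, makes the two coincide, and everything else is routine.
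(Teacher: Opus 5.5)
Your proof is correct and complete. The paper does not prove this lemma itself; it imports it as Lemma 5.3 of \cite{houdayer2023asymptotic}, so there is no in-paper argument to compare with. Your Gram-matrix argument is a valid self-contained proof with exactly the stated constant: you solve $Gc=\alpha$ with $G_{ij}=\tau(\xi_i\xi_j)$ real, take $h=\sum_j c_j\xi_j$, and bound $\|c\|_1$ through $\|Gv\|_1\ge(\delta^2-(p-1)\gamma)\|v\|_1$.

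Two small remarks. First, the column-dominance step does not actually need symmetry of $G$, because the hypothesis already bounds every off-diagonal entry by $\gamma$. Self-adjointness and traciality matter only in making $G$ real, so that $c$ is real and $h$ is self-adjoint. Second, you can drop the $\ell^\infty$ estimate and the detour through $G^{-1}e_k$. The $\ell^1$ lower bound by itself shows $G$ is injective, hence invertible.
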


We now state and prove the analogue of \cite[Lemma~5.4]{houdayer2023asymptotic} in the setting of $N$-independence.

\begin{lem}\label{lem-main-technical}
    Fix $N.$ There exist constants $C_1,C_2$, and $C_3$, depending only on $N,$ with the following property. Let $(M,\tau)$ be a tracial von Neumann algebra, $x = (x_1,\ldots,x_m)\in M_{\mathrm{sa},1}^m$, and $y = (y_1,\ldots,y_n)\in M_{\mathrm{sa},1}^n$, for some $m,n\ge1.$ Set $\delta = \delta(x,y)$, $\ee= \ee(x,y),$ and $\gamma = \gamma(x,y)$. Assume that $4N|\cF_N|\ee < \delta^2 - (2|\cF_N|-1)\gamma$ and set $\lambda = \frac{4N|\cF_N|\ee}{\delta^2 - (2|\cF_N|-1)\gamma}$. Then there exists $u$ a unitary in $M$ such that 
    \begin{enumerate}
        \item $\|u-1\| \le 2\lambda$;
        \item $\delta(uxu^{-1},y) \ge \delta - C_1\lambda$;
        \item $\ee(uxu^{-1},y)\le C_2\lambda^2$;
        \item $\gamma(uxu^{-1},y) \le \gamma + C_3\lambda$.
    \end{enumerate}
\end{lem}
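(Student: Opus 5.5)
The plan is to take $u=e^{ih}$ for a small self-adjoint $h$, chosen by Lemma~\ref{lem-lin-indep} so that every word trace in $\ee(x,y)$ vanishes to first order in $h$. Then $\ee(uxu^{-1},y)$ is controlled by the second-order Taylor remainder, which is $O(\|h\|^2)=O(\lambda^2)$.

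\textbf{First-order computation.} Fix a word $w=x_{i_1}y_{j_1}\cdots x_{i_r}y_{j_r}$ with $r\le N$, and let $\kappa_w\in\cF_N$ be its generalized commutator. Write $u_t=e^{ith}$ and differentiate $\tau(u_tx_{i_1}u_t^*y_{j_1}\cdots u_tx_{i_r}u_t^*y_{j_r})$ at $t=0$. Each $x_{i_k}$ contributes $i\tau(hx_{i_k}\cdots)-i\tau(x_{i_k}h\cdots)$. By traciality, the first term is $\tau(h\cdot(\text{cyclic rotation starting at }x_{i_k}))$ and the second is $\tau(h\cdot(\text{rotation starting at }y_{j_{k-1}}))$. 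Summing over $k$ gives exactly $i\tau(h\kappa_w(x,y))$, with the alternating signs in the definition of $\cF_N$.

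For the second order, expand $u=\sum (ih)^\ell/\ell!$ inside the product of $2r\le 2N$ factors, all of norm at most $1$. Everything beyond the linear term is bounded by $C(N)\|h\|^2$ whenever $\|h\|\le 1$, say. So it suffices to find $h$ with
\[
\tau(h\kappa_w(x,y))=i\,\tau(w)\quad\text{for all words }w,
\]
and $\|h\|\lesssim\lambda$. This handles (3). Distinct words with the same cyclic class give the same $\kappa$ up to sign and have the same trace up to that sign, so the system is consistent and indexed by $\cF_N$.

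\textbf{Applying Lemma~\ref{lem-lin-indep}.} That lemma is stated for self-adjoint $\xi$'s and real $\alpha$'s. I would therefore split each complex condition into two real ones, using
\[
\xi_{\kappa}^{1}=\frac{\kappa+\kappa^*}{4N},\qquad \xi_\kappa^{2}=\frac{\kappa-\kappa^*}{4Ni}.
\]
Since $\|\kappa(x,y)\|\le 2r\le 2N$, these lie in $M_{\mathrm{sa},1}$. This gives $p=2|\cF_N|$ vectors, which accounts for the $(2|\cF_N|-1)\gamma$ in the hypothesis. The targets $\alpha$ are the real and imaginary parts of $i\tau(w)$ rescaled by $1/(4N)$, so $\sum|\alpha_j|\le 2|\cF_N|\ee/(4N)$ up to the normalisation. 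After tracking the constants, this should give $\|h\|\le\lambda$, hence $\|u-1\|\le\|h\|e^{\|h\|}\le 2\lambda$ when $\lambda$ is small.

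\textbf{Main obstacle.} The hardest step is verifying the hypotheses of Lemma~\ref{lem-lin-indep} for these real and imaginary parts. We need $\|\xi^a_\kappa\|_2\ge\delta/(\text{const})$ and pairwise inner products at most a constant times $\gamma$, including the pairs $\langle\xi^1_\kappa,\xi^2_\kappa\rangle$ within a single $\kappa$. My first attempt would use the observation that $\kappa(x,y)^*$, for self-adjoint $x,y$, is $\pm\kappa'(x,y)$, where $\kappa'$ is the reversed word, again in $\cF_N$. Then the real and imaginary parts are sums or differences of two elements of $\{\kappa(x,y)\}$, and their Gram entries expand into terms $\tau(\kappa_2^*\kappa_1)$, each either a norm square or bounded by $\gamma$.

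If the self-reversed case $\kappa'=\kappa$ causes trouble (for example when one of the parts vanishes), I would instead solve directly over the complex span. That is, I would prove the complex analogue of Lemma~\ref{lem-lin-indep} by Gram-matrix diagonal dominance with the same bound, and take $h$ as the self-adjoint part. Either route may force the constants to be rescaled, absorbed into $\lambda$.

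\textbf{Remaining estimates.} Items (2) and (4) are Lipschitz estimates. We have $\|ux_iu^{-1}-x_i\|\le 2\|u-1\|\le 4\lambda$, and each $\kappa(\cdot,y)$ is a sum of at most $2N$ products of at most $2N$ contractions. Hence $\|\kappa(uxu^{-1},y)-\kappa(x,y)\|_2\le C(N)\lambda$, which gives (2). For (4), $|\tau(\kappa_2^*\kappa_1)|$ changes by at most $C(N)\lambda$ for the same reason.
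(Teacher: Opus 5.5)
Your overall architecture ($u=e^{ih}$, first-order cancellation via $\tau(h\kappa_w)=i\tau(w)$, a second-order remainder for (3), Lipschitz bounds for (2) and (4)) is the paper's. However, the step where you build the real system for Lemma~\ref{lem-lin-indep} fails, and not only in the self-reversed case. The map $\kappa\mapsto\kappa':=-\kappa^*$ is an involution of $\cF_N$ with $\kappa'(x,y)=-\kappa(x,y)^*$. Hence your vectors satisfy $\xi^1_{\kappa'}=-\xi^1_\kappa$ and $\xi^2_{\kappa'}=\xi^2_\kappa$. So whenever $\kappa'\neq\kappa$, your family of $2|\cF_N|$ vectors contains every vector twice up to sign. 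The off-diagonal entry $\langle\xi^1_\kappa,\xi^1_{\kappa'}\rangle=-\|\xi^1_\kappa\|_2^2$ is then as large as a diagonal one, the Gram matrix is singular, and no choice of constants satisfies the hypothesis of Lemma~\ref{lem-lin-indep}. When $\kappa'=\kappa$ one even has $\xi^1_\kappa=0$. Your remark that the Gram entries are ``either a norm square or bounded by $\gamma$'' is exactly where this slips, since norm squares appear off the diagonal. Relatedly, the factor $2|\cF_N|-1$ in the hypothesis does not come from $p=2|\cF_N|$.

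The missing idea is to split into real and imaginary parts once per orbit of the involution. The paper attaches $\xi_\kappa=\frac{1}{4N}(\kappa+\kappa^*)$ to one member of each pair, $\xi_{\kappa'}=\frac{i}{4N}(\kappa-\kappa^*)$ to the other, and $\xi_\kappa=-\frac{i}{2N}\kappa$ to each fixed point. This gives exactly $p=|\cF_N|$ self-adjoint contractions with $\|\xi_\kappa\|_2^2\ge(\delta^2-\gamma)/(8N^2)$ and pairwise inner products at most $\gamma/(4N^2)$. Lemma~\ref{lem-lin-indep} then yields precisely $\|h\|\le 4N|\cF_N|\ee/\bigl((\delta^2-\gamma)-2(|\cF_N|-1)\gamma\bigr)=\lambda$.

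Your complex fallback does work, and is arguably cleaner than the paper's case analysis. Take $\zeta_\kappa=\kappa(x,y)/(2N)$, $\beta_\kappa=i\tau(w_\kappa)/(2N)$ and $p=|\cF_N|$. Diagonal dominance of the Gram matrix gives $h_0\in\mathrm{span}_{\C}\{\zeta_\kappa^*\}$ with $\tau(h_0\zeta_\kappa)=\beta_\kappa$ and $\|h_0\|\le 2N|\cF_N|\ee/(\delta^2-(|\cF_N|-1)\gamma)\le\lambda/2$. But ``take $h$ as the self-adjoint part'' is not automatic: the self-adjoint part of a solution to a complex linear system need not solve it. It works here only because of the same involution. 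We have $\tau(h_0^*\zeta_\kappa)=\overline{\tau(h_0\zeta_\kappa^*)}=-\overline{\tau(h_0\zeta_{\kappa'})}=-\overline{\beta_{\kappa'}}$. The base word of $\kappa'$ is a cyclic rotation of $w_\kappa^*$, so $\tau(w_{\kappa'})=\overline{\tau(w_\kappa)}$, which gives $-\overline{\beta_{\kappa'}}=\beta_\kappa$. Therefore $h=\frac12(h_0+h_0^*)$ still satisfies every constraint. This verification is the real content of the step, and you must include it.

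A minor point: the lemma only assumes $\lambda<1$, not that $\lambda$ is small. So bound $\|e^{ih}-1\|\le e^{\|h\|}-1\le 2\|h\|$ (valid for $\|h\|\le 1$) rather than using $\|h\|e^{\|h\|}$.
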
 

\begin{proof}
    For each $\kappa\in \cF^{N}$, we note that there is another element $-\kappa^* \in \cF_N$ given by reversing the order of the products and multiplying by $-1$. (The notation is suggestive of the fact that $-\kappa^*(x,y) = -(\kappa(x,y))^*$ as $x$ and $y$ consist of self-adjoint elements.) Of course, one of two alternatives occur: either $\kappa=-\kappa^*$ or not. (For a simple example of the former, consider $\kappa = ij-ji$.) 

    Order the (finitely many) elements $\kappa \in \cF_N$ and iteratively define the following quantities. 
    If $\xi_\kappa$ and $\alpha_\kappa$ are already defined, skip to the next element in $\cF_N$. 
    Otherwise, if $\kappa=-\kappa^*,$ define $\xi_\kappa = -\frac{i}{2N}\kappa(x,y)$ and $\alpha_\kappa = \frac{1}{2N} \tau(x_{i_1}y_{j_1}\cdots x_{i_r}y_{j_r})$. Note that $\alpha_k$ is real, indeed since $\kappa = -\kappa^\ast$, the adjoint of the base word is a cyclic permutation of said base word, hence by the trace identity we have $\overline{\tau(w)}=\tau(w^\ast) = \tau(w)$.
    If $\kappa\neq -\kappa^*$, define $\xi_\kappa = \frac{1}{4N}(\kappa(x,y)+\kappa(x,y)^*)$,
    $\xi_{-\kappa^*} = \frac{i}{4N}(\kappa(x,y)-\kappa(x,y)^*)$, 
    $\alpha_\kappa = -\frac{1}{2N}\mathrm{Im}(\tau(x_{i_1}y_{j_1}\cdots x_{i_r}y_{j_r}))$,
    and $\alpha_{-\kappa^*} = -\frac{1}{2N}\mathrm{Re}(\tau(x_{i_1}y_{j_1}\cdots x_{i_r}y_{j_r}))$. We note that these quantities are well-defined since the words in $\cF_N$ are defined only up to (even) cyclic permutation and $\tau$ is invariant under cyclic permutations.

    We have that for each $\kappa\in \cF_N$, $\xi_\kappa = \xi_\kappa^*$, $\|\xi_\kappa\| \le 1$, and $$\|\xi_\kappa\|_2^2 \geq \frac{\delta^2-\gamma}{8N^2}.$$ 
    Indeed if $\kappa = - \kappa^\ast$, then $\xi_k = - \frac i {2N}z$ and $\|\xi_\kappa\|_2^2 = \frac{\|z\|_2^2}{4N^2}$. Since $\|z\|_2 > \delta$, we have $\|\xi_\kappa\|_2^2 \ge \frac{\delta^2}{4N^2} \ge \frac{\delta^2 - \gamma}{8N^2}$. Next, if $\kappa \ne -\kappa^\ast$ and $\kappa$ is defined first, then $\xi_k = \frac{z+z^\ast}{4N}$ and then 
    $$\|\xi_k\|_2^2 = \frac 1 {16N^2}\|z+z^\ast\|_2^2 = \frac 1 {16N^2}(2\|z\|_2^2 + 2 \operatorname{Re} \langle z,z^\ast \rangle)$$
    Noting that $|\langle z,z^\ast \rangle| \le \gamma$ and $\|z\|_2^2 \ge \delta^2$, we also get the desired $\|\xi_\kappa\|_2^2 \ge \frac{\delta^2 - \gamma}{8N^2}$. Lastly, if $\kappa \ne -\kappa^\ast$ and $-\kappa^\ast$ is defined first, then $\xi_{-\kappa^\ast} = \frac{i(z-z^\ast)}{4N}$ and then $\|\xi_{-\kappa^\ast}\|_2^2 = \frac 1 {16N^2}(2\|z\|_2^2 - 2 \operatorname{Re} \langle z,z^\ast \rangle) \ge \frac{\delta^2 - \gamma}{8N^2}$. A similar case analysis, using 
    $$|\langle \kappa(x,y)+\kappa(x,y)^*,i(\kappa(x,y)-\kappa(x,y)^*)\rangle| \le 2|\langle \kappa(x,y),\kappa(x,y)^*\rangle|$$
    also shows that $|\langle \xi_\kappa, \xi_{\kappa'}\rangle| \le \frac{\gamma}{4N^2}$ for all $\kappa \ne \kappa'$ in $\cF_N$.

    Applying Lemma \ref{lem-lin-indep}, we obtain $h\in M_{\mathrm{sa}}$ such that for each $\kappa\in \cF_N,$ we have $\tau(h\xi_\kappa) =\alpha_\kappa$ and $$\|h\| \le \frac{\sum_{\kappa\in\cF_N} |\alpha_\kappa|}{\frac{\delta^2-\gamma}{8N^2} - (|\cF_N|-1)\frac{\gamma}{4N^2}} \le \frac{4N|\cF_N|\ee}{(\delta^2-\gamma) - 2(|\cF_N|-1)\gamma} = \lambda < 1.$$

    Define $u = \exp(ih) = \sum_{n\ge0}\frac{1}{n!}(ih)^n$, which is a unitary in $M$. A routine calculation (using the triangle inequality and the submultiplicativity of $\|\cdot\|$) confirms that $\|u-1\| \le 2\lambda$ and $\|u - (1+ih)\| \le \lambda^2$.

    We also note the following inequality, which we will use repeatedly. Let $w(x,y)$ be a word in $x$ and $y,$ that is, $w(x,y) = x_{i_1}y_{j_1}\cdots x_{i_r}y_{j_r}$ (with $r\le N$). Then $\|w(uxu^{-1},y) - w(x,y)\| \le 2N\|u-1\| \le 4N \lambda$. Indeed, note that since $x_{i_k},y_{j_k}\in M_1$, we have
    \begin{align*}
        \|w(uxu^{-1},y) - w(x,y)\| &= \|ux_{i_1}u^{-1}y_{j_1} \cdots ux_{i_{r}}u^{-1}y_{j_r} - x_{i_1}y_{j_1}\cdots x_{i_r}y_{j_r}\| \\
        &\le \|ux_{i_1}u^{-1}y_{j_1} \cdots ux_{i_{r}}u^{-1}y_{j_r} - x_{i_1}u^{-1}u_{j_1} \cdots ux_{i_{r}}u^{-1}y_{j_r}\|  \\
        +\ldots &+ \|x_{i_1}y_{j_1} \cdots x_{i_{r}}u^{-1}y_{j_r} - x_{i_1}y_{j_1} \cdots x_{i_{r}}y_{j_r}\| \\
        &\le 2N\|u-1\| \max(\|x_{i_k}\|,\|y_{j_k}\|,\|u\|)^{2r}\\
        &\le 2N\|u-1\|.
    \end{align*}
    A very similar argument, using that $\|u - (1+ih)\| \le \lambda^2$, gives 
    \begin{align*}
        \|w(uxu^{-1},y) - w((1+ih)x(1-ih),y)\| &\le 2N \|u-(1+ih)\|\|(1+ih)\|^{2N} \\
        &\le N2^{2N+1} \lambda^2.
    \end{align*}

    For $\kappa\in \cF_N$, we have that $\kappa(x,y)$ is a sum of at most $2N$ words $w(x,y)$ where $w$ is a monomial of length at most $2N$. Therefore $$\|\kappa(uxu^{-1},y) - \kappa(x,y)\| \le 2N \max_w \|w(uxu^{-1},y) - w(x,y)\| \le 8N^2\lambda.$$ In particular, we have that $\|\kappa(uxu^{-1},y)\|_2 \ge \|\kappa(x,y)\|_2 - 8N^2\lambda$. This implies that $\delta(uxu^{-1},y) \ge \delta(x,y) - 8N^2\lambda$, so we may take $C_1 = 8N^2$.

    Similarly, for $\kappa_1,\kappa_2 \in \cF_N$ distinct, $\kappa_2(x,y)^*\kappa_1(x,y)$ is a sum of $4N^2$ words $w(x,y)$ of length at most $4N$. Therefore $$\|\kappa_2(uxu^{-1},y)^*\kappa_1(uxu^{-1},y) - \kappa_2(x,y)^*\kappa_1(x,y)\| \le 32N^3\lambda.$$ Similarly to $\delta,$ we deduce that $\gamma(uxu^{-1},y) \le \gamma(x,y) + 32N^3\lambda$; we may take $C_3 = 32N^3$.

    Lastly, let us consider what happens for $\ee.$ We observe that for any word $w$ of length at most $2N,$ $w((1+ih)x(1-ih),y)$ splits into a sum of (at most) $2^{2N}$ terms. Most of these terms have at least two occurrences of $h$, and the norm of the sum of all these terms is at most $2^{2N}\lambda^2$. In other words, 
    \begin{align*}
        \|&w((1+ih)x(1-ih),y) - x_{i_1}y_{j_1}\cdots x_{i_r}y_{j_r}  \\
        &- ihx_{i_1}y_{j_1}\cdots x_{i_r}y_{j_r} + ix_{i_1}hy_{j_1}\cdots x_{i_r}y_{j_r} - \ldots + ix_{i_1}y_{j_1}\cdots x_{i_r}hy_{j_r}\| \\ 
        &\le 2^{2N}\lambda^2.
    \end{align*}
    (Note the placement of the $h$'s.) Now we use our construction of $h$. If $\kappa = -\kappa^*$, then 
    \begin{align*}
        \tau(ih\kappa(x,y)) &= -2N\tau(h\xi_\kappa) = -2N\alpha_\kappa = -\tau(x_{i_1}\cdots y_{j_r}).
    \end{align*}
    If $\kappa \neq -\kappa^*$ and $\kappa$ was defined before $-\kappa^\ast$, then 
    \begin{align*}
        \tau(ih\kappa(x,y)) &= 2Ni\tau(h(\xi_\kappa - i\xi_{-\kappa^*})) \\
        &= 2Ni\alpha_\kappa + 2N\alpha_{-\kappa^*}\\
        &= -i\mathrm{Im}(\tau(x_{i_1}y_{j_1}\cdots x_{i_r}y_{j_r})) - \mathrm{Re}(\tau(x_{i_1}y_{j_1}\cdots x_{i_r}y_{j_r})) \\
        &= -\tau(x_{i_1}y_{j_1}\cdots x_{i_r}y_{j_r}).
    \end{align*}
    And lastly if $\kappa \ne -\kappa^\ast$ and $-\kappa^\ast$ was defined before $\kappa$, then a similar computation shows that $\tau(ih \kappa(x,y)) = - \tau(x_{i_1}y_{j_1}\cdots x_{i_r}y_{j_r})$ as well.
    Thus, in all cases, we have 
    \begin{align*}
        \tau(&x_{i_1}y_{j_1}\cdots x_{i_r}y_{j_r} \\&- ihx_{i_1}y_{j_1}\cdots x_{i_r}y_{j_r} + ix_{i_1}hy_{j_1}\cdots x_{i_r}y_{j_r} - \ldots + ix_{i_1}y_{j_1}\cdots x_{i_r}hy_{j_r})\\ &= \tau(x_{i_1}y_{j_1}\cdots x_{i_r}y_{j_r} + ih\kappa(x,y)) = 0.
    \end{align*}
    Therefore $|\tau(w((1+ih)x(1-ih),y))| \le 2^{2N}\lambda^2$, which in turn implies that $$|\tau(w(uxu^{-1},y))|\le (2^{2N} + N2^{2N+1})\lambda^2.$$ Thus we may take $C_2 = 2^{2N} + N2^{2N+1}$ and deduce that $\ee(uxu^{-1},y) \le C_2\lambda^2$.
\end{proof}

\begin{thm}\label{thm-main-technical}
    There exists a constant $C_0$, depending only on $N$, with the following property. Let $(M,\tau)$ be a tracial von Neumann algebra, $x = (x_1,\ldots,x_m)\in M_{\mathrm{sa},1}^m$, and $y = (y_1,\ldots,y_n)\in M_{\mathrm{sa},1}^n$, for some $m,n\ge1.$ Set $\delta = \delta(x,y)$, $\ee= \ee(x,y),$ and $\gamma = \gamma(x,y)$. If $C_0|\cF_N|\sqrt{\ee} < \delta^2 - (2|\cF_N|-1)\gamma$ and $\delta(\delta^2-(2|\cF_N|-1)\gamma)) \ge 8C_1N|\cF_N|\ee$, then  there exists a unitary $u$ in $M$ such that $\ee(uxu^{-1},y)=0$ and $\|u-1\| \le \frac{16N|\cF_N|\ee}{\delta^2 - (2|\cF_N|-1)\gamma} \le \frac{16N}{C_0}\sqrt{\ee}$.
\end{thm}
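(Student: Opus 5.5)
Iterate Lemma \ref{lem-main-technical} to produce a Newton-type scheme. Set $x^{(0)}=x$, and given $x^{(k)}$ apply the lemma to $(x^{(k)},y)$ to get a unitary $u_k$. Put $x^{(k+1)}=u_kx^{(k)}u_k^{-1}$ and $U_k=u_k\cdots u_0$. Write $\delta_k,\ee_k,\gamma_k$ for the three quantities at stage $k$, and set $D_k=\delta_k^2-(2|\cF_N|-1)\gamma_k$ and $\lambda_k=4N|\cF_N|\ee_k/D_k$. The lemma gives three recursions:
\[
\ee_{k+1}\le C_2\lambda_k^2, \qquad \delta_{k+1}\ge\delta_k-C_1\lambda_k, \qquad \gamma_{k+1}\le\gamma_k+C_3\lambda_k,
\]
together with $\|u_k-1\|\le 2\lambda_k$. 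The plan is to show by induction that the hypotheses of the lemma persist, that $\delta_k$ and $D_k$ stay bounded below by fixed fractions of $\delta$ and $D:=D_0$, and that the $\lambda_k$ decay at least geometrically. In fact they decay quadratically, since $\lambda_{k+1}\lesssim \ee_{k+1}/D \lesssim \lambda_k^2/D$.

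The induction hypothesis I would aim for is: $\delta_k\ge\delta/2$, $D_k\ge D/2$ and $\lambda_k\le 2^{-k}\lambda_0$. Given this, the sums $\sum_{j<k}C_1\lambda_j\le 2C_1\lambda_0$ and $\sum_{j<k}C_3\lambda_j\le 2C_3\lambda_0$ control the drift of $\delta$ and $\gamma$. The hypothesis $\delta D\ge 8C_1N|\cF_N|\ee$ says exactly $2C_1\lambda_0\le \delta/4$, which is what keeps $\delta_k\ge\delta/2$ (with some room). For $D_k$, note $\delta_k^2\ge\delta^2-2\delta\cdot 2C_1\lambda_0$ and $\gamma_k\le\gamma+2C_3\lambda_0$. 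Here I would use the hypothesis $C_0|\cF_N|\sqrt\ee<D$, which makes $\lambda_0\le 4N\sqrt{\ee}/C_0$, so $\lambda_0$ is small relative to $D$ once $C_0$ is large. Making $D_k\ge D/2$ may require checking that $\lambda_0\lesssim D/(C_1\delta+C_3|\cF_N|)$. I expect this to follow from the two hypotheses, perhaps after enlarging $C_0$ and using $\delta\le 2N$ and $\ee\le D^2/(C_0|\cF_N|)^2$.

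For the decay step, I would compute
\[
\lambda_{k+1}\le \frac{8N|\cF_N|C_2\lambda_k^2}{D}\le\frac{8N|\cF_N|C_2\lambda_0}{D}\lambda_k .
\]
Since $\lambda_0\le 4N|\cF_N|\sqrt\ee\cdot\sqrt\ee/D$ and $\sqrt\ee<D/(C_0|\cF_N|)$, we get $\lambda_0/D\le 4N\sqrt{\ee}/(C_0D)\cdot$ (bounded), and so $8N|\cF_N|C_2\lambda_0/D\le\tfrac12$ once $C_0$ is large in terms of $N$, $C_2$ and $|\cF_N|$; note that $|\cF_N|$ depends only on $N$ only if $m,n$ are excluded. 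This is the main obstacle. $|\cF_N|$ genuinely depends on $m,n$, so the choice of $C_0$ must absorb the factor $|\cF_N|$ through the explicit $|\cF_N|$ in the hypothesis rather than through $C_0$. I would first try to arrange every estimate so that $|\cF_N|$ appears at most linearly in the numerator, matching the $|\cF_N|$ on the left-hand side of the hypothesis. If that fails, I would use the bound $\ee\le 1$ together with $D\le\delta^2\le 4N^2$ to trade powers.

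Given the induction, $\ee_k\le C_2\lambda_{k-1}^2\to0$. The unitaries $U_k$ form a Cauchy sequence in operator norm, because $\|U_{k+1}-U_k\|=\|u_{k+1}-1\|\le2\lambda_{k+1}$. The limit $u$ satisfies
\[
\|u-1\|\le\sum_k 2\lambda_k\le 4\lambda_0=\frac{16N|\cF_N|\ee}{D}.
\]
Then $\ee(uxu^{-1},y)=\lim_k\ee_k=0$ by norm continuity of $\ee$ in the tuple. The final inequality $\frac{16N|\cF_N|\ee}{D}\le\frac{16N}{C_0}\sqrt\ee$ is immediate from $C_0|\cF_N|\sqrt\ee<D$.
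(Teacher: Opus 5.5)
Your scheme is the paper's: iterate Lemma~\ref{lem-main-technical}, control the drift of $\delta_k$ and $\gamma_k$ by a geometric series in the $\lambda_k$, show that $\lambda$ at least halves at each step, and pass to the operator-norm limit of the products. The paper's induction carries the lower bound $\delta_l^2-(2|\cF_N|-1)\gamma_l\ge 8N|\cF_N|C_2\lambda_1$ instead of your $D_k\ge D/2$, and the two are interchangeable. However, as written you leave open exactly the estimate on which the argument hinges. You call it the main obstacle and say ``I expect'', and your own one-line computation ends with $C_0$ depending on $|\cF_N|$, which the statement does not allow.

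It closes by using the first hypothesis twice, i.e.\ squaring it. From $C_0^2|\cF_N|^2\ee<D^2$ you get $\lambda_0=4N|\cF_N|\ee/D<4ND/(C_0^2|\cF_N|)$, which is the paper's inequality $D>\frac{C_0^2|\cF_N|}{4N}\lambda_1$. You used $\sqrt{\ee}<D/(C_0|\cF_N|)$ only once, which is why a factor $|\cF_N|$ seemed to survive. With the squared form, the decay ratio is $8N|\cF_N|C_2\lambda_0/D<32N^2C_2/C_0^2$. Using $\delta\le 2N$, the total drift of $D_k$ is at most $(4C_1\delta+2(2|\cF_N|-1)C_3)\lambda_0<(32N^2C_1+16NC_3)D/C_0^2$. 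With the paper's choice $C_0^2=64N^2(C_1+C_2+C_3)$, the first is at most $\frac12$ and the second at most $D/2$, so no trading of powers via $\ee\le 1$ is needed.

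Separately, there is an arithmetic slip. The hypothesis $\delta D\ge 8C_1N|\cF_N|\ee$ says $2C_1\lambda_0\le\delta$, not $2C_1\lambda_0\le\delta/4$. This is harmless: $\delta_k\ge\delta-2C_1\lambda_0\ge 0$ is all the squaring step $\delta_k^2\ge\delta^2-4C_1\delta\lambda_0$ needs, and your bound $\delta_k\ge\delta/2$ is never used. In fact $\delta_k\ge\delta/2$ follows from the first hypothesis alone, since $\lambda_0<4N\delta^2/C_0^2\le 8N^2\delta/C_0^2$. So for this $C_0$ the second hypothesis is implied by the first.
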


\begin{proof}
    Let $C_1,C_2,C_3\ge1$ be as in Lemma \ref{lem-main-technical}. Set $C_0 = 8N\sqrt{C_1+C_2+C_3}$. If $\ee=0$ then take $u=1$. Henceforth assume $\ee > 0$. Since $\ee \le 1$ we note the following two inequalities:

    \begin{equation}\label{eqn-def-lambda}
        \frac{4N|\cF_N|\ee}{\delta^2 - (2|\cF_N|-1)\gamma} < \frac{4N|\cF_N|\ee}{C_0|\cF_N|\sqrt{\ee}} \le \frac{4N}{C_0} \le \frac{1}{2}
    \end{equation}
    \begin{equation}\label{eqn-def-C0}
        \frac{8N|\cF_N|C_2}{\frac{C_0^2|\cF_N|}{4N} - 8NC_1 - 2(2|\cF_N|-1)C_3} \le 1.
    \end{equation}
    We also note from the hypothesis $\delta(\delta^2-(2|\cF_N|-1)\gamma)) \ge 8C_1N|\cF_N|\ee$ that
    \begin{equation}\label{eqn-pos-delta}
        \delta - 2C_1\frac{4N|\cF_N|\ee}{\delta^2 - (2|\cF_N|-1)\gamma} \ge 0.
    \end{equation}

    Let $\lambda_0 = 1$ and $u_0 = 1\in M$. We now construct inductively sequences $(u_k)$ of unitaries in $M$ and positive real numbers $\lambda_k$ with the following properties. Set $v_k = u_ku_{k-1}\cdots u_1$ and define $\delta_k = \delta(v_kxv_k^{-1},y),$ $\ee_k = \ee(v_kxv_k^{-1},y)$, and $\gamma_k = \gamma(v_kxv_k^{-1},y)$. Then the following will hold for our choices of $u_k$ and $\lambda_k$:
    \begin{enumerate}
        \item $\delta_k \ge \delta_{k-1} - C_1\lambda_k$
        \item $\ee_k \le C_2\lambda_k^2$
        \item $\gamma_k \le \gamma_{k-1} + C_3\lambda_k$
        \item $\lambda_k\le \frac{1}{2}\lambda_{k-1}$
    \end{enumerate}

    For $k=1,$ setting $\lambda_1 =  \frac{4N|\cF_N|\ee}{\delta^2 - (2|\cF_N|-1)\gamma}$, which by (\ref{eqn-def-lambda}) satisfies $\lambda_1\le \frac12$ and thus satisfies (4). From Lemma \ref{lem-main-technical} we obtain a unitary $u_1\in M$ satisfying (1)--(3) as well. Now assume $u_1,\ldots, u_l$ and $\lambda_1,\ldots,\lambda_l$ have been constructed. Define $\lambda_{l+1} = \frac{4N|\cF_N|\ee_l}{\delta_l^2 - (2|\cF_N|-1)\gamma_l}$. We first claim that $\lambda_{l+1} \le \frac{1}{2}\lambda_l$. 

    For each $1\le k \le l,$ by (\ref{eqn-pos-delta}), (1), and (4) we have that 
    \begin{align*}
        \delta_{k-1} - C_1\lambda_k &\ge \delta_0 - C_1(\lambda_1 + \ldots + \lambda_k)\\
        &\ge \delta - 2C_1\lambda_1\\
        &\ge 0.
    \end{align*}

    Now, for each $1\le k\le l,$ by (1), since $\delta_k \le 2N$, and since $\delta_{k-1}-C_1\lambda_k\ge 0$, we have
    \begin{equation*}
        \delta_k^2 \ge (\delta_{k-1}- C_1\lambda_k)^2 \ge \delta_{k-1}^2 - 2C_1\delta_{k-1}\lambda_k \ge \delta^2_{k-1} - 4NC_1\lambda_k.
    \end{equation*}

    By the previous inequality and (3), we have
    \begin{align*}
        \delta_k^2 - (2|\cF_N|-1)\gamma_k &\ge \delta^2_{k-1} - 4NC_1\lambda_k - (2|\cF_N|-1)\gamma_k \\
        &\ge \delta^2_{k-1} - 4NC_1\lambda_k - (2|\cF_N|-1)(\gamma_{k-1} + C_3\lambda_k) \\
        &= \delta^2_{k-1}  - (2|\cF_N|-1)\gamma_{k-1} - [4NC_1 + (2|\cF_N|-1)C_3]\lambda_k
    \end{align*}

    By applying the previous inequality for all $1\le k\le l$ and item (4), we have (noting that $\delta_0=\delta$ and $\gamma_0=\gamma$)
    \begin{align*}
         \delta_l^2 - (2|\cF_N|-1)\gamma_l &\ge \delta_0^2 - (2|\cF_N|-1)\gamma_0 - [4NC_1 + (2|\cF_N|-1)C_3]\sum_{k=1}^l\lambda_k\\
         &\ge \delta^2 - (2|\cF_N|-1)\gamma - [8NC_1 + 2(2|\cF_N|-1)C_3]\lambda_1 
    \end{align*}

    Now observe that by assumption, $C_0^2|\cF_N|^2\ee < (\delta^2 - (2|\cF_N|-1)\gamma)^2$ so that 
    \begin{align*}
        \delta^2 - (2|\cF_N|-1)\gamma > \frac{C_0^2|\cF_N|^2\ee}{\delta^2 - (2|\cF_N|-1)\gamma} = \frac{C_0^2|\cF_N|}{4N}\lambda_1.
    \end{align*}

    Therefore 
    \begin{align*}
        \delta_l^2 - (2|\cF_N|-1)\gamma_l &\ge \left[\frac{C_0^2|\cF_N|}{4N} - 8NC_1 - 2(2|\cF_N|-1)C_3\right]\lambda_1 \\
        &\ge 8N|\cF_N|C_2 \lambda_1.
    \end{align*}
        Note that this last inequality follows from (\ref{eqn-def-C0}).
    Now using the previous inequality, item (2), and the fact that $\lambda_l\le \lambda_1,$ we compute that
    \begin{align*}
        \lambda_{l+1} = \frac{4N|\cF_N|\ee_l}{\delta^2_l - (2|\cF_N|-1)\gamma_l} \le \frac{4N|\cF_N|C_2\lambda_l^2}{8N|\cF_N|C_2 \lambda_1} \le \frac{\lambda_l}{2}.
    \end{align*}
    This completes the claim. 

    We may now apply Lemma \ref{lem-main-technical} to $v_lxv_l^{-1}$ and $y$ to obtain a new unitary $u_{l+1}$; items (1)--(3) are straightforward to verify. We also note that $\|u_{l+1} - 1\| \le 2\lambda_{l+1}$ by Lemma \ref{lem-main-technical}.

    We now see that the sequence $(v_k)$ is Cauchy in operator norm; indeed, $\|v_{k+1} - v_k\| \le \|u_{k+1}-1\| \le 2\lambda_{k+1} \le 2^{-k}$ by item (4). Let $v$ be the limit of the sequence $(v_k)$, so that $v$ is itself a unitary in $M$. Arguing as in Lemma \ref{lem-main-technical}, we have that $\lim_k \ee(v_kxv_k^{-1},y)= \ee(vxv^{-1},y)$. But the former are bounded by $C_2\lambda_k^2 \to 0$, so $\ee(vxv^{-1},y) = 0.$ We also note that 
    \begin{align*}
        \|v-1\| &\le \sum_{k=0}^\infty \|v_{k+1}-v_k\| \\
        &\le \sum_{k=0}^\infty 2\lambda_{k+1}\\
        &\le 4\lambda_1 \\
        &= \frac{16N|\cF_N|\ee}{\delta^2 - (2|\cF_N|-1)\gamma}, 
    \end{align*}
    proving the last assertion in the statement of the theorem.
\end{proof}

\begin{proof}[Proof of Theorem \ref{mthm-perturb}]
    Let $\widetilde X = \{\operatorname{Re} x, \operatorname{Im}x: x \in X\}$ and $\widetilde Y = \{\operatorname{Re} y, \operatorname{Im}y: y \in Y\}$. Let $C$ and $D$ be the linear spans of $\widetilde X \cup \{xx' - \tau(xx') : x,x'\in \widetilde X\}$ and $\widetilde Y \cup \{yy' - \tau(yy'): y,y'\in \widetilde Y\}$, respectively. Let $x=(x_1,\ldots, x_m)$ and $y=(y_1,\ldots, y_n)$ be orthogonal bases of $E = C+C^*$ and $F = D+D^*$, respectively, such that each $x_i,y_j$ is in $M_{\mathrm{sa},1}$. By Popa's asymptotic freeness theorem \cite{popa1995free} 
    (applied to a separable $\mathrm{II}_1$-subfactor containing $E$ and $F$), there is a unitary $u\in M^\cU$ such that $W^*(uEu^*)$ and $W^*(F)$ are freely independent. First, note that any two words $w(uxu^*,y),w'(uxu^*,y)$ (of the form $x_{i_1}y_{j_1}\cdots x_{i_r}y_{j_r}$) are either equal to each other or orthogonal by free independence. This implies $\delta(uxu^*,y)>0$ and $\gamma(uxu^*,y) = 0$. Furthermore, each of these words $w(uxu^*,y)$ has trace 0, so $\ee(uxu^*,y)=0.$

    Therefore there is a unitary $u'$ in $M$ such that, setting $\delta = \delta(u'xu'^*,y)$, $\ee=\ee(u'xu'^*,y),$ and $\gamma = \gamma(u'xu'^*,y)$, we have $$C_0|\cF_N|\sqrt{\ee} < \delta^2 - (2|\cF_N|-1)\gamma \quad \text{and} \quad \delta(\delta^2-(2|\cF_N|-1)\gamma)) \ge 8C_1N|\cF_N|\ee.$$ By Theorem \ref{thm-main-technical}, we get a unitary $v \in M$ such that $\ee(vu'xu'^*v^*,y) = 0$. By Lemma \ref{lem:trace-trick}(1), we have that $vu'\widetilde Xu'^*v^*$ and $\widetilde Y$ are $N$-independent. It follows that $vu'Xu'^*v^*$ and $ Y$ are $N$-independent as well.
\end{proof}

\begin{proof}[Proof of Theorem \ref{mthm-lift-algs}]
    The proof is nearly the exact same as in \cite[Theorem~5.1]{houdayer2023asymptotic}, 
    replacing 2-independent with $2N$-independent and orthogonal with $N$-independent. 
    As in the proof of \cite[Theorem~5.1]{houdayer2023asymptotic}, we deduce that there are orthogonal bases $x_i = (x_{1,i},\ldots,x_{m_i,i})$ of $C_i$ and $y_i = (y_{1,i},\ldots,y_{n_i,i})$ of $D_i$ of self-adjoint elements such that  $\ee(x_i,y_i) = 0$ for each $i$, 
    which by Lemma \ref{lem:trace-trick}(2) implies each pair of finite-dimensional subalgebras $(C_i,D_i)$ are $N$-independent.
\end{proof}

\begin{proof}[Proof of Theorem \ref{mthm-sri-problem}]
    
    First suppose that $\cU$ is a countably cofinal ultrafilter. Let $u,v\in M^\cU$ be freely independent Haar unitaries. Since $W^*(u)$ and $W^*(v)$ are abelian and separable, Theorem \ref{mthm-lift-algs} provides for each $N$ sequences of unitaries $(u_{n,N})_n$ and $(v_{n,N})_n$ such that: 
    \begin{enumerate}
        \item $(u_{n,N})_n$ is a lift of $u$;
        \item $(v_{n,N})_n$ is a lift of $v$;
        \item $(u_{n,N})_n$ and $(v_{n,N})_n$ are $N$-independent for all $n \in A_N$ (for some set $A_N\in\cU$ possibly dependent on $N$). 
    \end{enumerate}
    Since for each $N$ $(u_{n,N})_n$ is a lift of $u$ and $(v_{n,N})_n$ is a lift of $v$, for all $N$ there is a set $B_N\in\cU$ such that for all $n\in B_N$, $\|u_{n,N} - u_{n,1}\| < \frac1N$ and $\|v_{n,N}-v_{n,1}\|<\frac1N$. 

    Since $\cU$ is countably cofinal, there is a sequence of sets $C_N \in\cU$ such that for each $N$, $C_N \subset A_N\cap B_N$ and the $C_N$ are strictly decreasing and have empty intersection. Define
    \begin{align*}
        u_n &= u_{n,N} \text{ for } n\in C_N\setminus C_{N+1} \\
        v_n &= v_{n,N} \text{ for } n\in C_N\setminus C_{N+1}.
    \end{align*}
    It remains now to verify that $(u_n)_n$ is a lift of $u$, $(v_n)_n$ is a lift of $v$, and that for any $N$, $u_n$ and $v_n$ are $N$-independent for all $n\in C_N$. For the former two statements, note that for $ n\in C_N$, there is some $M\ge N$ such that $n \in C_M\setminus C_{M+1}$, so we have $\|u_n-u_{n,1}\| < \frac1M \le \frac1N$. Therefore, as $n\to\cU$ we have $\|u_n - u_{n,1}\| \to 0$ which means that $(u_n)_n = (u_{n,1})_n = u$, and similarly for $(v_n)_n$. For the latter, note that for $n\in C_N$ we again have that for some $M\ge N,$ $n\in C_{M}\setminus C_{M+1}$ so that $W^*(u_n)$ and $W^*(v_n)$ are $M$-independent and thus $N$-independent.

    On the other hand, if $\cU$ is countably complete, then by \cite[Lemma~2.3]{boutonnet:hal-01450072} (see also \cite[Proposition~6.1(2)]{ge2001ultraproducts}) the diagonal embedding $M\to M^\cU$ is an isomorphism. Then two freely independent Haar unitaries $u,v\in M^\cU = M$ are already freely independent, hence $N$-independent for all $N$.
\end{proof}

\bibliographystyle{amsalpha}
\bibliography{inneramen}

\end{document}